\providecommand{\U}[1]{\protect\rule{.1in}{.1in}}
\newtheorem{theorem}{Theorem}[section]
\newtheorem{condition}[theorem]{Condition}
\newtheorem{corollary}[theorem]{Corollary}
\newtheorem{lemma}[theorem]{Lemma}
\newtheorem{proposition}[theorem]{Proposition}
\newtheorem{remark}[theorem]{Remark}
\newenvironment{proof}[1][Proof]{\noindent\textbf{#1.} }{\ \rule{0.5em}{0.5em}}
\begin{document}

\title{A note on the policy iteration algorithm for discounted Markov decision
processes for a class of semicontinuous models\thanks{In memoriam of our
friend and colleague Rolando Cavazos-Cadena who passed away last May.}}
\author{\'{O}scar Vega-Amaya\thanks{Correspondence author. Email: ovega@mat.uson.mx}
and Fernando Luque-V\'{a}squez\thanks{Email: fluque@mat.uson.mx}\\Departamento de Matem\'{a}ticas\\Universidad de Sonora\thanks{Luis Encinas y Abelardo L. Rodr\'{\i}guez s/n, C.
P. 83000, Hermosillo, Sonora, M\'{e}xico}}
\date{July 05, 2023}
\maketitle

\begin{abstract}
The standard version of the policy iteration (PI) algorithm fails for
semicontinuous models, that is, for models with lower semicontinuous one-step
costs and weakly continuous transition law. This is due to the lack of
continuity properties of the discounted cost for stationary policies, thus
appearing a measurability problem in the improvement step. The present work
proposes an alternative version of PI algorithm which performs an smoothing
step to avoid the measurability problem. Assuming that the model satisfies a
Lyapunov growth conditions and also some standard continuity-compactness
properties, it is shown the linear convergence of the policy iteration
functions to the optimal value function. Strengthening the continuity
conditions, in a second result, it is shown that among the improvement
policies there is one with the best possible improvement and whose cost
function is continuous.

Key words: discounted Markov decision processes, semicontinuous models, policy
iteration algorithm.

\end{abstract}

\section{Introduction}

The policy iteration (PI) algorithm is a popular procedure for solving optimal
control problems \cite{Boka, HL96, HL23, Put94}. It is credited to R. A.
Howard \cite{Howard}, so it is also known as Howard's improvement algorithm or
just Howard's algorithm. Since its inception, researchers have been interested
in the PI algorithm because its good performance in many specific problems
\cite{Santos} or because it serves as a base for other numerical procedures,
and as well by its close relationship with two other important algorithms,
namely, the simplex method of linear programming \cite{Put94, Wessels, Ye} and
the Newton-Raphson method \cite{Bert22, Ohnishi, Put79, Put94}. However, the
PI algorithm experiencies a measurability problem for lower semicontinuous
models, that is, for models with lower semicontinuous one-step cost and weakly
continuous transition law; in fact, this measurability difficulty is also
present in the analytical or universal measurabilty framework. Yu and
Bertsekas \cite{Yu15} give a very detailed discussion of this issue.

Recall that the PI algorithm runs iteratively two steps: the first one is the
evaluation step, which finds the cost function of a given stationary policy;
the second one is the improvement step, which finds a measurable selector (or
stationary policy) that reaches the minimum in the dynamic programming
operator acting on the cost function found previously, and then the algorithm
goes back to the first step, and so on.

The measurability problem appears in the second step because the lack of
continuity properties of the cost function given by the previous step. One way
to overcome this measurability problem is to consider variants in which the
improvement is performed respect to a different function than the one coming
from the first step; in general, it is expected the convergence analysis turns
out much more involved than in the standard case. Yu and Bertsekas \cite{Yu15}
developed a mixed value and policy iteration algorithm for models with Borel
spaces and universally measurable policies, in general, and for lower
semicontinuous models too. The present note focuses on semicontinuous models,
but instead of combining policy iteration with value iteration--or any other
approximation scheme--it performs first a smoothing or regularization step
over the previous cost function which finds its lower semicontinuous envelope.
Thus, the improvement step is done over this latter semicontinuos function.
The convergence analysis of this variant of PI\ algorithm is straightforward
as it is in the standard case.

Specifically, this note shows the convergence of this variant of the PI
algorithm for a class of discounted optimal control problems for lower
semicontinuous models with compact admissible actions sets and assumming that
the one-step cost function and the transition law satisfy a Lyapunov\ growth
condition. In this framework the dynamic programming operator is a
contraction, which allows to show that the policy iteration functions converge
linearly to the optimal value function in a weighted norm( see Theorem
\ref{PIA} below); in particular, for bounded cost function the convergence is
uniform. Strengthening the conditions by assumming that the one-step cost
function and the admissible actions set multifunction are continuous, it is
shown that among the improvement policies there is one with the best possible
improvement and whose cost function is continuous (Theorem \ref{MPI} below).
However, finding such a policy requires solving a new optimal control problem,
which is expected to be simple one or not too complicated in specific problems.

\section{Markov decision model}

Consider the standard Markov decision model $(X,A,\{A(x):x\in X\},Q,C)$ where:
(a) $X$ and $A$ denote the state and control (or action) spaces; both $X$ and
$A$ are Borel spaces, that is, Borel subsets of a complete separable metric
spaces; (b) for each $x\in X,$ $A(x)$ is a subset of $A$ and denotes the
admissible actions for the state $x;$ the admissible pair state-action pairs
set $\mathbb{K}:=\{(x,a):x\in X,a\in A(x)\}$ is assumed to be a Borel subset
of the cartesian product $X\times A;$ (c) the transition law $Q$ is an
stochastic kernel on $X$ given $\mathbb{K},$ that is, $Q(\cdot|x,a)$ is a
probability measure on $X$ for each $(x,a)\in\mathbb{K},$ and $Q(B|\cdot
,\cdot)$ is a (Borel) measurable function on $\mathbb{K}$ for each (Borel)
measurable subset $B\subset X$;\ (d) the one-step cost $C:\mathbb{K\rightarrow
R}$ is a (Borel) measurable function; $\mathbb{R}$ stands for the set of real numbers.

The fifth-tuple $(X,A,\{A(x):x\in X\},Q,C)$ models a controlled system that
evolves as follows: at time $t=0,$ the controller or decision maker observes
the initial system state $x_{0}=x\in X$ and chooses a control or decision
$a_{0}=a\in A(x)$ incurring in a cost $C(x,a);$ then, the system changes to
the state $x_{1}=y\in X$ according to the probability measure $Q(\cdot|x,a),$
that \ is, $\Pr[x_{1}\in B|x_{0}=x,a_{0}=a]=Q(B|x,a)$ for mesurable subsets
$B\subset X.$ After that, the controller choose and action $a_{1}=b\in A(y)$
with a cost $C(y,b)$ and so on. Thus, let $x_{n}$ and $a_{n}$ be the state and
the control at time $n\in\mathbb{N}_{0}.$

Let $H_{0}:=X$ and $H_{n}:=\mathbb{K}\times H_{n-1}$ for $n\in\mathbb{N}.$
Thus, the history of the systems up to time $n\in\mathbb{N}_{0}$ is given by
$h_{n}=(x_{0},a_{0},x_{1},a_{1},\ldots,x_{n-1},a_{n-1},x_{n})\in H_{n}$. A
control policy is a sequence $\pi=\{\pi_{n}\}$ where each $\pi_{n}$ is an
stochastic kernel on $A$ given $\mathbb{H}_{n}$ satisfying the condition
$\pi_{n}(A(x_{n})|h_{n})=1$ for all $h_{n}\in H_{n}.$The class of all policies
is denoted by $\Pi.$

Denote by $\mathbb{F}$ the class of all measurable selectors from $X$ to $A,$
that is, the measurable functions $f:X\rightarrow A$ that satisfies the
condition $f(x)\in A(x)$ for all $x\in X.$ A control policy $\pi=\{\pi_{n}\}$
is said to be stationary if for some $f\in\mathbb{F}$ the measure $\pi
_{n}(\cdot|h_{n})$ is concentrated at $f(x_{n})$ for all $h_{n}\in
H,n\in\mathbb{N}_{0}.$ In this case, the control policy $\pi=\{\pi_{n}\}$ is
identified with the selector $f$ and the class of all stationary policies is
identified with $\mathbb{F}.$

Let $\Omega:=(X\times A)^{\infty}$ and $\mathcal{F}$ the corresponding product
$\sigma$-algebra. It is well-known that for each policy $\pi\in\Pi$ and
\textquotedblleft initial\textquotedblright\ state $x\in X$ there exists a
unique probability measure $P_{x}^{\pi}$ on the measurable space
$(\Omega,\mathcal{F)}$ such that the following properties hold for all
$n\in\mathbb{N}_{0}$ : (a) $P_{x}^{\pi}[x_{0}=x]=1;\ $(b) $P_{x}^{\pi}%
[a_{n}\in D|h_{n}]=\pi_{n}(D|h_{n})$ for all measurable subset $D\subset A;$
(c) $P_{x}^{\pi}[x_{n+1}\in B|h_{n}]=Q(B|x_{n},a_{n})$ for all measurable
subsets $B\subset X.$

Let \ $\alpha\in(0,1)$ be a fixed \textquotedblleft discount
factor\textquotedblright. The ($\alpha$-)discounted cost for policy $\pi
=\{\pi\}\in\Pi$ and initial state $x_{0}=x\in X$ is defined as%
\[
V_{\pi}(x):=E_{x}^{\pi}\sum_{k=0}^{\infty}\alpha^{k}C(x_{k},a_{k}).
\]
The discounted optimal value function is given as%
\[
V(x):=\inf_{\pi\in\Pi}V_{\pi}(x),\ \ x\in X.
\]
Thus, a policy $\pi^{\ast}=\{\pi_{n}^{\ast}\}\in\Pi$ is said to be optimal if%
\[
V(x)=V_{\pi^{\ast}}(x)\ \ \forall x\in X.
\]

The following notation is used throughtout of the remainder of this note:%
\[
Qu(x,a):=\int_{X}u(y)Q(dy|x,a)
\]
for $(x,a)\in\mathbb{K}$ and functions $u:X\rightarrow\mathbb{R}$ for which
the integral is well defined. Moreover, for a policy $f\in\mathbb{F}$ let%
\[
C_{f}(x):=C(x,f(x))\ \ \ \text{and\ \ \ }Q_{f}u(x):=Qu(x,f(x)),\ \ x\in X.
\]

Next, for an arbitrary function $u:X\rightarrow\mathbb{R}$ define%
\[
u^{e}(x):=\sup_{r>0}\inf_{y\in B_{r}(x)}u(y),
\]
where $B_{r}(x)$ stands for the open ball centered in $x\in X$ with radius
$r>0.$ Note that $u^{e}$ is the largest lower semicontinuous function
dominated by $u$, that is, if a function $v:X\rightarrow\mathbb{R}$ is lower
semicontinuos and $u\geq v$, then $u^{e}\geq v.$ Thus, $u^{e}$ is called the
lower semicontinuous envelope of function $u.$ Moreover, note that $u$ is
lower semicontinuous if and only if $u=u^{e}.$

The proof of the convergence of the policy iteration functions uses the
following result on the interchange of minimum and limit.

\begin{proposition}
\label{lim}(\cite[Note 5, p. 53]{HL96}) Let $u_{n},u:\mathbb{K\rightarrow R}$
be measurable functions. If $u_{n}\downarrow u,$ then%
\[
\lim_{n\rightarrow\infty}\inf_{a\in A(x)}u_{n}(x,a)=\inf_{a\in A(x)}u(x,a).
\]

\end{proposition}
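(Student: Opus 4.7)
The statement is the classical ``interchange of infimum and monotone decreasing limit'' fact, and the plan is to prove the two inequalities separately, treating the limit as unproblematic because the sequence is monotone.

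First, I would note that because $u_n\downarrow u$ pointwise on $\mathbb{K}$, for every fixed $x\in X$ the sequence $n\mapsto \inf_{a\in A(x)} u_n(x,a)$ is itself nonincreasing in $n$, so $\lim_{n\to\infty}\inf_{a\in A(x)} u_n(x,a)$ exists in $[-\infty,\infty)$. This removes any doubt about the existence of the limit without needing to invoke continuity or compactness.

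For the easy direction ($\geq$), I would use the pointwise inequality $u_n\geq u$ on $\mathbb{K}$ to get $\inf_{a\in A(x)} u_n(x,a)\geq \inf_{a\in A(x)} u(x,a)$ for every $n$, and then pass to the limit in $n$. For the other direction ($\leq$), I would fix $x\in X$ and an arbitrary $a\in A(x)$, observe that $\inf_{a'\in A(x)} u_n(x,a')\leq u_n(x,a)$, let $n\to\infty$ to obtain $\lim_n \inf_{a'\in A(x)} u_n(x,a')\leq u(x,a)$ (using $u_n(x,a)\downarrow u(x,a)$), and finally take the infimum over $a\in A(x)$ on the right.

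No real obstacle is expected: the argument does not require measurable selectors, $\sigma$-compactness of $A(x)$, or lower semicontinuity of any $u_n$, since each half of the inequality uses only the pointwise monotone convergence and elementary properties of the infimum. Measurability of $u_n$ and $u$ enters only to make the statement meaningful (so that each $u_n(x,\cdot)$ and $u(x,\cdot)$ are functions on $A(x)$ to which ``$\inf$'' applies); it plays no role in the argument itself.
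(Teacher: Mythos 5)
Your argument is correct: the two-inequality proof works exactly as you describe, and the key point is simply that infima commute, i.e. $\inf_{n}\inf_{a\in A(x)}u_{n}(x,a)=\inf_{a\in A(x)}\inf_{n}u_{n}(x,a)$, combined with the fact that a nonincreasing sequence converges to its infimum; no compactness of $A(x)$, semicontinuity, or measurable-selection argument is needed for the decreasing case (those would only be needed for the increasing case $u_{n}\uparrow u$). The paper itself gives no proof and merely cites \cite[Note 5, p.\ 53]{HL96}, so your self-contained elementary argument is a valid substitute for that citation; your remark that measurability is irrelevant to the argument is also accurate.
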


\section{Lyapunov condition and preliminary results}

This section gather some important known results for the discounted optimal
control problem obtained assumming the control model satisfies two standard
set of conditions. The first one imposes the next growth Lyapunov condition on
the costs and also on the evolution law.

\begin{condition}
\label{GC}Growth conditions: There exist constants $M>0,\beta>1$ and a
function $W:X\rightarrow\lbrack1,\infty)$ such that:\smallskip

(a) $|C(x,a)|\leq MW(x)$ for all $(x,a)\in\mathbb{K};$\smallskip

(b) $QW(x,a)=\int_{X}W(y)Q(dy|x,a)\leq\beta W(x)$ for all $(x,a)\in
\mathbb{K};$\smallskip

(c) $\gamma:=\beta\alpha<1.$
\end{condition}

The second set of condition concerns with usual continuity/compactness properties.

\begin{condition}
\label{CC}Compactness-continuity conditions:\smallskip

(a) the mapping $x\rightarrow A(x)$ is upper-semicontinuous and
compact-valued;\smallskip

(b) $C$ is lower semicontinuous on the set $\mathbb{K}$;\smallskip

(c) $Q(\cdot|\cdot,\cdot)$ is weakly continuous on $\mathbb{K},$ that is, the
mapping%
\[
(x,a)\rightarrow Qu(x,a):=\int_{X}u(y)Q(dy|x,a)
\]
is continuous for each bounded continuous function $u:X\mathbb{\rightarrow
}\mathbb{R};$\smallskip

(d) the functions $W$ and $QW$ are continuous.
\end{condition}

Now let $B_{W}(X)$ be the class of functions $u:X\rightarrow\mathbb{R}$ such
that%
\[
||u||_{W}:=\sup_{x\in X}\frac{|u(x)|}{W(x)}<\infty.
\]
Denote by $L_{W}(X)$ and $C_{W}(X)$ the subclasses of functions of $B_{W}(X)$
that are lower semicontinuous and continuous, respectively. Notice that
$B_{W}(X)$ and $C_{W}(X)$ are Banach spaces and also that $L_{W}(X)$ is a
complete metric subspace with respect to the metric induced by the norm
$||\cdot||_{W}.$ Moreover, observe that if the function $W$ is continuous,
then $u^{e}\in L_{W}(X)$ for all $u\in B_{W}(X).$

Note that under Condition \ref{GC}, for each policy $\pi\in\Pi$ it holds that
$|V_{\pi}|\leq M(1-\gamma)^{-1}W.$ Thus, $|V_{\ast}|\leq M(1-\gamma)^{-1}W$.
Hence, the functions $V_{\ast}$ and $V_{\pi},\pi\in\Pi,$ belong to $B_{W}(X)$.

Next for each $f\in\mathbb{F}$ define%
\[
T_{f}u(x):=C_{f}(x)+\alpha Q_{f}u(x),\ \ x\in X,
\]
for functions $u\in B_{W}(X)$. The dynamic programming operator is defined as%
\[
Tu(x):=\inf_{a\in A(x)}[C(x,a)+\alpha Qu(x,a)],\ \ x\in X.
\]

For the proof of all results of this section the reader is referred to
\cite[Section 8.5, p. 65]{HL99}.

\begin{remark}
\label{FP1}Suppose that Conditions \ref{GC} and \textbf{\ref{CC}} hold. Then:

(a) For each function $u\in L_{W}(X)$ the function $Tu\in L_{W}(X)$ and there
exists $f\in\mathbb{F}$ such that%
\[
Tu=T_{f}u=C_{f}+\alpha Q_{f}u.
\]
For this results see, for instance, \cite[Remark 3.6]{OVA18}.

(b) Moreover, $T$ is a contraction operator on $L_{W}(X)$ with modulus
$\gamma.$

(c) Similarly, $T_{f},f\in\mathbb{F},$ is a contraction operator from
$B_{W}(X)$ into itself with contraction modulus $\gamma$ and $V_{f}$ is the
unique fixed point of $T_{f}$ in $B_{W}(X),$ that is,%
\[
V_{f}=T_{f}V_{f}=C_{f}+\alpha Q_{f}V_{f}.
\]

\end{remark}

\begin{theorem}
\label{FP2} Suppose that Conditions \ref{GC} and \ref{CC} hold. The optimal
value function $V_{\ast}$ is the unique fixed point of operator $T$ in
$L_{W}(X),$ that is, $V_{\ast}$ is the unique function in $L_{W}(X)$ that
satisfies the optimality equation%
\[
V_{\ast}(x)=\inf_{a\in A(x)}[C(x,a)+\alpha\int_{X}V_{\ast}%
(y)Q(dy|x,a)]\ \ \forall x\in X.
\]
Thus:

(a) there exists $f^{\ast}\in\mathbb{F}$ such that $TV_{\ast}=T_{f_{\ast}%
}V_{\ast};$

(b) a policy $f$ $\in\mathbb{F}$ is optimal if and only if $TV_{\ast}%
=T_{f}V_{\ast};$ hence, the policy $f^{\ast}$ is optimal.
\end{theorem}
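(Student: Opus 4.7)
The plan is to obtain $V_*$ as the unique fixed point of $T$ in $L_W(X)$ via the Banach contraction principle, and then separately verify that this fixed point coincides with the optimal value function. By Remark \ref{FP1}(b), $T$ is a $\gamma$-contraction on the complete metric space $(L_W(X), \|\cdot\|_W)$, so there is a unique $U \in L_W(X)$ with $U = TU$. The substantive work is to show $U = V_*$, and the two directions require quite different arguments.

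For the inequality $U \geq V_*$, I would invoke Remark \ref{FP1}(a) to get a selector $f^* \in \mathbb{F}$ with $TU = T_{f^*}U$. Then $U = TU = T_{f^*}U$, so $U$ is a fixed point of $T_{f^*}$ in $B_W(X)$; by Remark \ref{FP1}(c), $U = V_{f^*}$, and hence $U = V_{f^*} \geq V_*$. This already produces the selector required for part (a) of the theorem.

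For the reverse inequality $U \leq V_\pi$ for \emph{every} $\pi \in \Pi$ (not just stationary policies), I would use the fixed-point equation pointwise: for each $(x,a) \in \mathbb{K}$, $U(x) \leq C(x,a) + \alpha Q U(x,a)$. Fix $\pi \in \Pi$ and $x \in X$; taking $E_x^\pi$, applying the tower property, and iterating $n$ times yields
\[
U(x) \leq E_x^\pi \sum_{k=0}^{n-1}\alpha^k C(x_k,a_k) + \alpha^n E_x^\pi U(x_n).
\]
By Condition \ref{GC}(b) one has $E_x^\pi W(x_n) \leq \beta^n W(x)$, and since $|U| \leq \|U\|_W W$, the remainder term is bounded by $\|U\|_W \gamma^n W(x) \to 0$ as $n \to \infty$. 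Letting $n \to \infty$ gives $U(x) \leq V_\pi(x)$, and taking the infimum over $\pi$ yields $U \leq V_*$. Combined with the first step, $U = V_*$, which is the fixed point and uniqueness claims.

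Part (b) then falls out of the characterization just obtained. If $TV_* = T_f V_*$, then $V_* = T_f V_*$, so by uniqueness in Remark \ref{FP1}(c) $V_* = V_f$ and $f$ is optimal. Conversely, if $f$ is optimal, then $V_f = V_* = TV_* \leq T_f V_* = T_f V_f = V_f$, forcing equality throughout so $TV_* = T_f V_*$. The main obstacle is the step $U \leq V_\pi$ for general (not stationary) policies: the contraction argument gives control only against the stationary class, and one genuinely needs the Lyapunov growth bound $E_x^\pi W(x_n) \leq \beta^n W(x)$ together with $\gamma < 1$ to push the remainder to zero in the iterated DP inequality.
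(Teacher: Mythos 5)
Your proof is correct and follows essentially the standard argument: the paper does not prove Theorem \ref{FP2} itself but defers to \cite[Section 8.5, p. 65]{HL99}, where the same scheme is used, namely Banach's fixed point theorem on $L_W(X)$, the inequality $U\geq V_*$ via a minimizing selector and Remark \ref{FP1}(c), and the inequality $U\leq V_\pi$ for arbitrary $\pi\in\Pi$ via the iterated dynamic programming inequality. In particular, your handling of the remainder term $\alpha^n E_x^\pi U(x_n)\leq \|U\|_W\gamma^n W(x)$ through Condition \ref{GC} is exactly the device the authors themselves employ in the proof of Theorem \ref{PIA}(c).
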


The readers can found in reference \cite[Section 8.6, p. 68]{HL99} an
inventory model and a queueing system that satisfy the assumptions in Theorem
\ref{MPI}.

\section{The policy iteration algorithm}

In this section it is assumed that Conditions \ref{GC} and \ref{CC} hold.
Recall that $V_{f}^{e},f\in\mathbb{F},$ stands for the lower semicontinuous
envelope of fuction $V_{f}$ and note that $V_{f}^{e}$ belongs to $B_{W}(X)$
since $W$ is continuous. Thus, the policy iteration algorithm runs as follows.

\begin{description}
\item[Initial step.] Set $n=0$ and choose $f_{n}\in\mathbb{F}.$

\item[Evaluation step.] Find the function $v_{n}:=V_{f_{n}}.$ By Remark
\ref{FP1}(c), this function can be found by solving the equation%
\[
v=C_{f_{n}}+\alpha Q_{f_{n}}v.
\]

\item[Smoothing/regularization step.] Find $v_{n}^{e}.$

\item[Improvement step.] Find a selector $f_{n+1}\in\mathbb{F}$ such that%
\begin{align*}
Tv_{n}^{e}(x)  &  =\min_{a\in A(x)}[C(x,a)+\alpha\int_{X}v_{n}^{e}%
(y)Q(dy|x,a)]\\
& \\
&  =C_{f_{n+1}}(x)+\alpha Q_{f_{n+1}}v_{n}^{e}(x)
\end{align*}
for all $x\in X$. Next, put $n:=n+1$ and go to the evaluation step. Note that
Remark \ref{FP1}(a) ensures the existence of such selector.
\end{description}

The next lemma proves the basic facts for the convergence of the policy
iteration functions $\{v_{n}\}.$

\begin{lemma}
\label{L1} Let $g_{0}\in\mathbb{F}$ be an arbitrary selector and $g_{1}%
\in\mathbb{F}$ such that%
\[
TV_{g_{0}}^{e}=T_{g_{1}}V_{g_{0}}^{e}.
\]

Then:\smallskip

(a) $V_{g_{0}}^{e}\geq TV_{g_{0}}^{e};$\smallskip

(b) $V_{g_{0}}^{e}\geq V_{g_{1}};$\smallskip

(c) $V_{g_{0}}^{e}\geq TV_{g_{0}}^{e}\geq V_{g_{1}};$\smallskip

(d) if $V_{g_{1}}=V_{g_{0}}^{e}$ then $V_{g_{1}}=V_{\ast}.$\smallskip


\end{lemma}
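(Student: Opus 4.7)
The plan is to prove the four parts in the order given, using monotonicity of the operators $T$ and $T_f$, the defining property of the lower semicontinuous envelope, and uniqueness of the fixed point of $T$.

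For part (a), the main obstacle is that we cannot just use $V_{g_0}=T_{g_0}V_{g_0}\geq TV_{g_0}$ directly, since this yields an inequality with $V_{g_0}$, not with $V_{g_0}^e$. The idea is to combine monotonicity with the maximality property of the envelope. Since $V_{g_0}^e\leq V_{g_0}$ and $T$ is monotone, I would write
\[
TV_{g_0}^e\leq TV_{g_0}\leq T_{g_0}V_{g_0}=V_{g_0}.
\]
By Remark \ref{FP1}(a), $TV_{g_0}^e\in L_W(X)$, in particular it is lower semicontinuous. Because $V_{g_0}^e$ is the largest lower semicontinuous function dominated by $V_{g_0}$, the inequality $TV_{g_0}^e\leq V_{g_0}$ forces $TV_{g_0}^e\leq V_{g_0}^e$, which is (a).

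For part (b), I would use the characterization $TV_{g_0}^e=T_{g_1}V_{g_0}^e$ together with the inequality from (a) to get $V_{g_0}^e\geq T_{g_1}V_{g_0}^e$. Iterating and invoking monotonicity of $T_{g_1}$ gives $V_{g_0}^e\geq T_{g_1}^nV_{g_0}^e$ for every $n\in\mathbb{N}$. Since $T_{g_1}$ is a $\gamma$-contraction on $B_W(X)$ with unique fixed point $V_{g_1}$ (Remark \ref{FP1}(c)), the iterates $T_{g_1}^nV_{g_0}^e$ converge to $V_{g_1}$ in $\|\cdot\|_W$, hence pointwise, and the limiting inequality yields (b). Part (c) is then immediate: from (a) one has $V_{g_0}^e\geq TV_{g_0}^e=T_{g_1}V_{g_0}^e$, and applying monotonicity of $T_{g_1}$ to (b) gives $T_{g_1}V_{g_0}^e\geq T_{g_1}V_{g_1}=V_{g_1}$.

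For part (d), assume $V_{g_1}=V_{g_0}^e$. Using the definition of $g_1$ and the fixed-point equation for $V_{g_1}$,
\[
TV_{g_1}=TV_{g_0}^e=T_{g_1}V_{g_0}^e=T_{g_1}V_{g_1}=V_{g_1}.
\]
Thus $V_{g_1}$ is a fixed point of $T$; since $V_{g_1}=V_{g_0}^e\in L_W(X)$ and $V_*$ is the unique fixed point of $T$ in $L_W(X)$ by Theorem \ref{FP2}, we conclude $V_{g_1}=V_*$. The only nontrivial step in the whole argument is part (a), where one must invoke both the lower semicontinuity of $TV_{g_0}^e$ (not automatic for $TV_{g_0}$) and the maximality property of the envelope; the remaining parts are routine consequences of monotonicity and the contraction structure.
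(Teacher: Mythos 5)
Your proposal is correct and follows essentially the same route as the paper's proof: part (a) rests on establishing $TV_{g_0}^e\leq V_{g_0}$ (the paper goes via $V_{g_0}=T_{g_0}V_{g_0}\geq T_{g_0}V_{g_0}^e\geq TV_{g_0}^e$, you via $TV_{g_0}^e\leq TV_{g_0}\leq T_{g_0}V_{g_0}$, which is the same square traversed the other way) and then invoking lower semicontinuity of $TV_{g_0}^e$ together with the maximality of the envelope, while (b), (c) use the contraction iterates $T_{g_1}^nV_{g_0}^e\to V_{g_1}$ and monotonicity exactly as in the paper. Your part (d) verifies the fixed-point equation $TV_{g_1}=V_{g_1}$ directly rather than squeezing the inequalities of (c), but this is the same uniqueness argument via Theorem \ref{FP2}.
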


\begin{proof}
(a) Observe that%
\[
V_{g_{0}}=T_{g_{0}}V_{g_{0}}\geq T_{g_{0}}V_{g_{0}}^{e}\geq TV_{g_{0}}^{e}.
\]
Since $TV_{g_{0}}^{e}$ is lower semicontinuous, it follows that%
\[
V_{g_{0}}^{e}\geq TV_{g_{0}}^{e}.
\]

(b) Recall that $TV_{g_{0}}^{e}=T_{g_{1}}V_{g_{0}}^{e}.$ This and part (a)
implies that
\[
V_{g_{0}}^{e}\geq T_{g_{1}}^{n}V_{g_{0}}^{e}.
\]
Remark \ref{FP1}(c) implies that $T_{g_{1}}^{n}V_{g_{0}}^{e}\rightarrow
V_{g_{1}}$ in the weighted norm $||\cdot||_{W},$ and thus pointwise too.
Hence, the result follows after taking limit in both sides of the above inequality.

(c) These inequalities follows from parts (a), (b) and Remark \ref{FP1}(c):%
\[
V_{g_{0}}^{e}\geq TV_{g_{0}}^{e}=T_{g_{1}}V_{g_{0}}^{e}\geq T_{g_{1}}V_{g_{1}%
}=V_{g_{1}}.
\]

(d) This part is a direct consequence of part (c) and Theorem \ref{FP2}.


\end{proof}

\begin{theorem}
\label{PIA}Suppose that Assumption \ref{GC} holds. Let $f_{0}\in\mathbb{F}$ be
an arbitrary stationary policy and $\{v_{_{n}}\}$ be the sequence of policy
iteration functions starting with policy $f_{0}$. Then:\smallskip

(a) $v_{n}\geq v_{n}^{e}\geq Tv_{n}^{e}\geq v_{n+1}\geq v_{n+1}^{e}$ for all
$n\in\mathbb{N}_{0};$\smallskip

(b) if $v_{n}^{e}=v_{n+1},$ then $v_{n}^{e}=V_{\ast}$ and $f_{n+1}$ is an
optimal policy;\smallskip

(c) $V_{\ast}=\lim_{n\rightarrow\infty}v_{n}=\lim_{n\rightarrow\infty}%
v_{n}^{e}=\lim_{n\rightarrow\infty}Tv_{n}^{e}.$
\end{theorem}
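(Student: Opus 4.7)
My plan is to derive all three parts from Lemma \ref{L1} applied at each iteration, together with the contraction property of the dynamic programming operator $T$ on $L_W(X)$ (Remark \ref{FP1}(b)) to get the rate of convergence.

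For part (a) I would specialize Lemma \ref{L1} to $g_0=f_n$ and $g_1=f_{n+1}$: by construction of the improvement step we have $Tv_n^e = T_{f_{n+1}}v_n^e$, so parts (a)--(c) of the lemma yield $v_n^e \geq Tv_n^e \geq V_{f_{n+1}} = v_{n+1}$. The remaining inequalities $v_n\geq v_n^e$ and $v_{n+1}\geq v_{n+1}^e$ are immediate from the very definition of the lower semicontinuous envelope. Part (b) is then a direct application of Lemma \ref{L1}(d) with the same $g_0, g_1$: if $v_n^e=v_{n+1}$ then $v_{n+1}=V_\ast$, which in turn forces $v_n^e=V_\ast$ and shows $f_{n+1}$ is optimal.

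The substance of the argument is in part (c). The key observation I would exploit is that, by Theorem \ref{FP2}, $V_\ast$ is lower semicontinuous, and since $V_\ast \leq V_{f_{n+1}} = v_{n+1}$ pointwise, the maximality of the lsc envelope among lsc minorants gives $V_\ast \leq v_{n+1}^e$; likewise $V_\ast\leq v_n$. Combined with part (a) this sandwiches
\[
V_\ast \;\leq\; v_{n+1}^e \;\leq\; Tv_n^e \;\leq\; v_n^e.
\]
Writing $Tv_n^e - V_\ast = Tv_n^e - TV_\ast$ and using monotonicity plus the $\gamma$-contraction of $T$ on $L_W(X)$, I obtain pointwise
\[
0 \;\leq\; v_{n+1}^e(x) - V_\ast(x) \;\leq\; Tv_n^e(x) - TV_\ast(x) \;\leq\; \gamma\,\|v_n^e-V_\ast\|_W\,W(x),
\]
so $\|v_{n+1}^e-V_\ast\|_W \leq \gamma\,\|v_n^e-V_\ast\|_W$. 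Iterating gives $\|v_n^e-V_\ast\|_W \leq \gamma^n\|v_0^e-V_\ast\|_W \to 0$, hence $v_n^e\to V_\ast$ (even linearly) in the weighted norm. Using the other inequalities from part (a), $V_\ast\leq v_n\leq Tv_{n-1}^e$ and $V_\ast\leq Tv_n^e\leq v_n^e$, the same contraction estimate transfers the convergence to the sequences $\{v_n\}$ and $\{Tv_n^e\}$ at the same geometric rate.

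The argument is essentially bookkeeping once Lemma \ref{L1} is in hand; I do not foresee a genuine obstacle. The delicate point I would flag is the lower bound $v_n^e \geq V_\ast$, which relies crucially on the lsc regularity of $V_\ast$ established in Theorem \ref{FP2}: without it, one only knows $v_n \geq V_\ast$, which is insufficient to sandwich $v_n^e$ and run the contraction argument. Monotonicity $u\leq v \Rightarrow Tu\leq Tv$, used repeatedly above, follows immediately from the definition of $T$ and the nonnegativity of the transition kernel.
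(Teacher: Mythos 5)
Your parts (a) and (b) coincide with the paper's (both are read off from Lemma \ref{L1} with $g_0=f_n$, $g_1=f_{n+1}$), but your part (c) takes a genuinely different route, and it is correct. The paper proves (c) by a soft limit argument: monotonicity from (a) gives a pointwise limit $w:=\lim v_n=\lim v_n^e=\lim Tv_n^e$ in $B_W(X)$; Proposition \ref{lim} (interchange of infimum and decreasing limit) yields $w=Tw$; and since $w$, being a decreasing limit of lower semicontinuous functions, need not be lower semicontinuous, the authors cannot invoke the uniqueness of the fixed point of $T$ in $L_W(X)$ and instead verify directly that $w\leq V_\pi$ for every $\pi$ by iterating $w\leq C+\alpha Qw$ and killing the tail $\alpha^nE_x^\pi w(x_n)\leq \|w\|_W(\alpha\beta)^nW(x)$ via Condition \ref{GC}. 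You instead use Theorem \ref{FP2} to get that $V_\ast$ is lower semicontinuous with $V_\ast=TV_\ast$, deduce $V_\ast\leq v_n^e$ from the maximality of the lsc envelope (correctly flagged as the crux), and then run the $\gamma$-contraction of $T$ on $L_W(X)$ through the sandwich $V_\ast\leq v_{n+1}^e\leq Tv_n^e\leq v_n^e$ to obtain $\|v_n^e-V_\ast\|_W\leq\gamma^n\|v_0^e-V_\ast\|_W$. Your argument buys more than the theorem asks for: it delivers the linear rate in the $W$-norm at once, which is exactly the content (and essentially the proof) of the paper's Corollary \ref{LC}. What the paper's route buys is independence from the contraction machinery and from the regularity of $V_\ast$ in the convergence step itself, at the price of only pointwise convergence and a separate corollary for the rate. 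Both arguments are sound under the standing hypotheses of the section (Conditions \ref{GC} and \ref{CC} together; the theorem's statement citing only \ref{GC} is a slip in the paper, since the improvement step and Theorem \ref{FP2} already require \ref{CC}).
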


\begin{proof}
Parts (a) and (b) follow directly from Lemma \ref{L1}. To prove part (c) first
observe that%
\[
w:=\lim_{n\rightarrow\infty}v_{n}=\lim_{n\rightarrow\infty}v_{n}^{e}%
=\lim_{n\rightarrow\infty}Tv_{n}^{e}.
\]
Clearly, $w\geq V_{\ast}$; moreover, it belongs to $B_{W}(X)$ because the
inequalities%
\[
-M(1-\gamma)^{-1}W\leq v_{n}\leq M(1-\gamma)^{-1}W
\]
hold for all $n\in\mathbb{N}$. Then, from Proposition \ref{lim}, it follows
that $w=Tw.$ This implies that%
\[
w(x)\leq C(x,a)+Qw(x,a)\ \ \forall(x,a)\in\mathbb{K},
\]
which in turn leads to%
\[
w(x)\leq E_{x}^{\pi}\sum_{k=0}^{n-1}\alpha^{k}C(x_{k},a_{k})+\alpha^{n}%
E_{x}^{\pi}w(x_{n})
\]
for all $x\in X,\pi\in\Pi$ and $n\in\mathbb{N}_{0}.$ On the other hand,
Condition \ref{GC} implies that%
\[
\alpha^{n}E_{x}^{\pi}w(x_{n})\leq||w||_{W}(\alpha\beta)^{n}W(x)\ \ \forall
x\in X,\pi\in\Pi.
\]
Thus,%
\[
w(x)\leq V_{\pi}(x)\ \ \forall x\in X,\pi\in\Pi,
\]
which yields that%
\[
w(x)\leq V_{\ast}(x)\ \ \forall x\in X.
\]
Therefore, $w=V_{\ast}.$
\end{proof}

The next result extends \cite[Thm. 6.4.6, p. 180]{Put94} establishing the
linear convergence of the PI algorithm for unbounded costs.

\begin{corollary}
\label{LC} Suppose that Conditions \ref{GC} and \ref{CC} hold. The sequence
$\{v_{n}\}$ converges linearly to $V_{\ast}$ in the $W$-norm. In fact,%
\[
||v_{n+1}-V_{\ast}||_{W}\leq\gamma||v_{n}-V_{\ast}||_{W}\leq L\gamma^{n}%
\]
for all $n\in\mathbb{N},$ with $L:=||v_{0}-V_{\ast}||_{W}.$ Thus,%
\[
\limsup_{n\rightarrow\infty}\frac{||v_{n+1}-V_{\ast}||_{W}}{||v_{n}-V_{\ast
}||_{W}}\leq\gamma.
\]

\end{corollary}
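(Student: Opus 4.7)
The plan is to chain together three ingredients already established in the excerpt: the monotone sandwich of Theorem~\ref{PIA}(a), the fact that $V_{\ast}$ is the (lower semicontinuous) fixed point of $T$ given by Theorem~\ref{FP2}, and the contraction property of $T$ on $L_{W}(X)$ with modulus $\gamma$ from Remark~\ref{FP1}(b).

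First I would observe that for every $n\in\mathbb{N}_{0}$ one has $v_{n}=V_{f_{n}}\geq V_{\ast}$ pointwise, simply because $V_{\ast}$ is the infimum of $V_{\pi}$ over all policies. Since $V_{\ast}\in L_{W}(X)$ is lower semicontinuous and dominated by $v_{n}$, the defining property of the lower semicontinuous envelope gives
\[
V_{\ast}\leq v_{n}^{e}\leq v_{n}.
\]
In particular $0\leq v_{n}^{e}-V_{\ast}\leq v_{n}-V_{\ast}$, and dividing by $W>0$ and taking the supremum yields the elementary bound $\|v_{n}^{e}-V_{\ast}\|_{W}\leq\|v_{n}-V_{\ast}\|_{W}$.

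Next I would combine Theorem~\ref{PIA}(a), which gives $V_{\ast}\leq v_{n+1}\leq Tv_{n}^{e}$, with the identity $V_{\ast}=TV_{\ast}$ from Theorem~\ref{FP2}. Subtracting produces
\[
0\leq v_{n+1}-V_{\ast}\leq Tv_{n}^{e}-TV_{\ast}.
\]
Taking $W$-norms on both sides and invoking the contraction of $T$ on $L_{W}(X)$ (note that both $v_{n}^{e}$ and $V_{\ast}$ lie in $L_{W}(X)$, so Remark~\ref{FP1}(b) applies directly) gives
\[
\|v_{n+1}-V_{\ast}\|_{W}\leq\|Tv_{n}^{e}-TV_{\ast}\|_{W}\leq\gamma\|v_{n}^{e}-V_{\ast}\|_{W}\leq\gamma\|v_{n}-V_{\ast}\|_{W},
\]
which is the first inequality in the corollary. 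A straightforward induction then delivers $\|v_{n}-V_{\ast}\|_{W}\leq L\gamma^{n}$, and the $\limsup$ assertion is immediate.

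There is really no serious obstacle here; the argument is almost mechanical once one notices the two points that make the contraction applicable on the nose: $v_{n}^{e}\in L_{W}(X)$ (which uses continuity of $W$ in Condition~\ref{CC}(d)) and $v_{n}^{e}\geq V_{\ast}$ (which uses that $V_{\ast}$ is lower semicontinuous). The only mild subtlety worth flagging in the write-up is that we estimate $\|v_{n+1}-V_{\ast}\|_{W}$ through the upper bound $Tv_{n}^{e}$ rather than trying to apply $T$ to $v_{n+1}$ itself, since $v_{n+1}$ need not lie in $L_{W}(X)$ and is therefore not a legitimate argument for a contraction-on-$L_{W}(X)$ inequality.
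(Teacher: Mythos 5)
Your argument is correct and follows essentially the same route as the paper: the same three-link chain $\|v_{n+1}-V_{\ast}\|_{W}\leq\|Tv_{n}^{e}-TV_{\ast}\|_{W}\leq\gamma\|v_{n}^{e}-V_{\ast}\|_{W}\leq\gamma\|v_{n}-V_{\ast}\|_{W}$, obtained from Theorem~\ref{PIA}(a), Theorem~\ref{FP2} and the contraction property in Remark~\ref{FP1}. You merely spell out the pointwise sandwich arguments ($V_{\ast}\leq v_{n}^{e}\leq v_{n}$ and $V_{\ast}\leq v_{n+1}\leq Tv_{n}^{e}$) that the paper leaves implicit when passing to $W$-norms.
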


\begin{proof}
Theorems \ref{PIA}(a) and \ref{FP2}, and Remark \ref{FP1} imply that%
\[
||v_{n+1}-V_{\ast}||_{W}\leq||Tv_{n}^{e}-TV_{\ast}||_{W}\leq\gamma||v_{n}%
^{e}-V_{\ast}||_{W}\leq\gamma||v_{n}-V_{\ast}||_{W}
\]
for all $n\in\mathbb{N}$. The desired results follows immediately from the
latter inequality.
\end{proof}

\begin{remark}
(a) From Corollary \ref{LC}, the policy iteration funtions $\{v_{n}\}$
converges uniformly to $V_{\ast}$ on the sublevel sets $X_{\lambda}:=\{x\in
X:W(x)\leq\lambda\},\lambda\in\mathbb{R}$. If the function $W$ is
inf-compact--that is, the sets $X_{\lambda},\lambda\in\mathbb{R},$ are
compact--the sequence converges uniformly on compact sets.

(b) On the other\ hand, if the cost function is bounded, Condition \ref{GC}
holds trivially with $M:=\sup_{(x,a)\in\mathbb{K}}|C(x,a)|$ and $W\equiv1.$
Thus, Theorem \ref{LC} implies that the policy iteration functions $\{v_{n}\}$
converge uniformly to the optimal value function $V_{\ast}.$
\end{remark}

If Assumptions \ref{CC} is strengthened by additionally assuming that
the\ cost function $C$ is continuous on $\mathbb{K}$ and that the mapping
$x\rightarrow A(x)$ is continuous too, it is possible to choose stationary
policies in the improvement step whose discounted costs are lower
semicontinuous functions. However, finding such policies requires the solution
of a new control problem, which will not be difficult to solve in specific
problems because there are usually a small number of policies that solves the
improvement step; in fact, in many cases, there is only one improvement
policy. This result is shown in the next theorem, which uses the following
notation. For a function $v\in B_{W}(X)$ let%
\[
Lv(x,a):=C(x,a)+\alpha Qv(x,a),\ \ (x,a)\in\mathbb{K},
\]
and for a policy $f_{0}\in\mathbb{F}$ define
\[
A_{1}(x):=\{a\in A(x):TV_{f_{0}}^{e}(x)=LV_{f_{0}}^{e}(x,a)\},\ \ x\in X,
\]
and%
\[
\mathbb{F}_{1}:=\{f\in\mathbb{F}:f(x)\in A_{1}(x)\text{ for all }x\in X\}.
\]
Note that $\mathbb{F}_{1}$ is the family of all the improvement policies
generated by the function $V_{f_{0}}^{e}.$

\begin{theorem}
\label{MPI}Suppose that Conditions \ref{GC} and \ref{CC} hold and let
$f_{0}\in\mathbb{F}$ be an arbitrary stationary policy. If in addition the
one-step cost function $C$ is continuous on $\mathbb{K}$ and the mapping
$x\rightarrow A(x)$ is continuous, then:

(a) $TV_{f_{0}}^{e}$ is continuous;

(b) the mapping $x\rightarrow A_{1}(x)$ is compact-valued and upper semicontinuous;

(c) there exists a unique continuous function $w_{1}\in C_{W}(X)$ and a
stationary policy $f_{1}\in\mathbb{F}_{0}$ such that%
\begin{align*}
w_{1}(x)  &  =\inf_{a\in A_{1}(x)}[C(x,a)+\alpha Qw_{1}(x,a)]\\
& \\
&  =C_{f_{1}}(x)+\alpha Q_{f_{1}}w_{1}(x)
\end{align*}
for all $x\in X;$

(d) thus, $f_{1}$ is the best improvement policy, that is,
\[
V_{f_{1}}(x)=\inf_{f\in\mathbb{F}_{1}}V_{f}(x)\ \ \ \forall x\in X.
\]

\end{theorem}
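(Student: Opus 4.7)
The plan is to prove the four statements (a)--(d) in sequence. Part (a), the continuity of $TV_{f_{0}}^{e}$, is the main technical obstacle; once it is available, (b) follows from a Berge-type argument, and (c)--(d) are extracted by studying the auxiliary discounted problem whose admissible correspondence is the argmin set $A_{1}$.

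For part (a), lower semicontinuity of $TV_{f_{0}}^{e}$ is immediate: since $|V_{f_{0}}^{e}|\le|V_{f_{0}}|\le M(1-\gamma)^{-1}W$, $V_{f_{0}}^{e}$ is lsc and $W$ is continuous, so $V_{f_{0}}^{e}\in L_{W}(X)$, and Remark~\ref{FP1}(a) gives $TV_{f_{0}}^{e}\in L_{W}(X)$. The challenge is upper semicontinuity. Under the strengthened hypotheses, Berge's maximum theorem ensures that $T$ maps $C_{W}(X)$ into itself. To leverage this, I would pick an increasing sequence $\phi_{n}\in C_{W}(X)$ with $\phi_{n}\uparrow V_{f_{0}}^{e}$ pointwise; then $Q\phi_{n}\uparrow QV_{f_{0}}^{e}$ by monotone convergence, and each $T\phi_{n}$ is continuous. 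Combining monotonicity of $T$ with the compactness of $A(x)$ and a careful passage to the limit (a minimax interchange in the spirit of Proposition~\ref{lim} adapted for monotone increasing sequences) identifies $T\phi_{n}\uparrow TV_{f_{0}}^{e}$. The delicate point is that the pointwise increasing limit of continuous functions is only lsc in general; recovering upper semicontinuity here requires exploiting the $\gamma$-contractivity of $T$ in the $W$-norm together with a careful choice of $\phi_{n}$, or a refined Berge-type estimate, and is the main obstacle in the proof.

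Part (b) is then routine. Compactness of $A_{1}(x)$ holds because $A_{1}(x)=\{a\in A(x):LV_{f_{0}}^{e}(x,a)\le TV_{f_{0}}^{e}(x)\}$ is closed in the compact set $A(x)$, since $LV_{f_{0}}^{e}$ is lsc in $a$. For upper semicontinuity of $A_{1}(\cdot)$, let $x_{n}\to x_{0}$ and $a_{n}\in A_{1}(x_{n})$ with $a_{n}\to a_{0}$; then $a_{0}\in A(x_{0})$ by usc of $A(\cdot)$, and combining lsc of $LV_{f_{0}}^{e}$ with continuity of $TV_{f_{0}}^{e}$ from (a),
\[
LV_{f_{0}}^{e}(x_{0},a_{0})\le\liminf_{n}LV_{f_{0}}^{e}(x_{n},a_{n})=\lim_{n}TV_{f_{0}}^{e}(x_{n})=TV_{f_{0}}^{e}(x_{0}),
\]
hence $a_{0}\in A_{1}(x_{0})$.

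For parts (c) and (d), introduce the operator $T_{1}w(x):=\inf_{a\in A_{1}(x)}[C(x,a)+\alpha Qw(x,a)]$ on $B_{W}(X)$. A direct computation (as in Remark~\ref{FP1}) shows $T_{1}$ is a $\gamma$-contraction on $B_{W}(X)$, so it has a unique fixed point $w_{1}\in B_{W}(X)$; a measurable selection theorem applied to the usc compact-valued correspondence $A_{1}$ yields $f_{1}\in\mathbb{F}_{1}$ with $T_{1}w_{1}=T_{f_{1}}w_{1}$, and uniqueness of the fixed point of $T_{f_{1}}$ in $B_{W}(X)$ forces $w_{1}=V_{f_{1}}$. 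For continuity of $w_{1}$, I would iterate $T_{1}$ from $w^{(0)}:=TV_{f_{0}}^{e}\in C_{W}(X)$; Lemma~\ref{L1}(a) yields $TV_{f_{0}}^{e}\le V_{f_{0}}^{e}$, so $T_{1}w^{(0)}\le w^{(0)}$, and monotonicity of $T_{1}$ gives $w^{(n)}:=T_{1}^{n}w^{(0)}\downarrow w_{1}$ with $W$-norm convergence by contractivity; the monotone decrease of the $w^{(n)}$ from the continuous starting function $w^{(0)}$, combined with a Berge-style analysis over the usc correspondence $A_{1}$, should yield continuity of the limit $w_{1}$. Finally, for (d), any $f\in\mathbb{F}_{1}$ satisfies $T_{1}V_{f}\le T_{f}V_{f}=V_{f}$; iterating gives $T_{1}^{n}V_{f}\le V_{f}$, and letting $n\to\infty$ in $W$-norm gives $w_{1}\le V_{f}$, so $V_{f_{1}}=w_{1}\le V_{f}$ for every $f\in\mathbb{F}_{1}$, confirming $f_{1}$ as the best improvement policy.
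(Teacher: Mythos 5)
Your overall architecture --- establish continuity of $TV_{f_{0}}^{e}$, deduce that the argmin correspondence $A_{1}$ is compact-valued and upper semicontinuous, then analyze the restricted discounted problem over $A_{1}$ through its Bellman operator --- matches the paper's, and your arguments for (b) and (d) are sound as far as they go. But there is a genuine gap exactly where you flag one: part (a) is not proved, and it is the load-bearing step. The paper closes it by importing from \cite[Lemma 8.5.5(a)]{HL99} that, under Condition \ref{CC} (weak continuity of $Q$ together with continuity of $W$ and $QW$), the map $(x,a)\mapsto Qv(x,a)$ is continuous on $\mathbb{K}$ for $v\in L_{W}(X)$; hence $LV_{f_{0}}^{e}=C+\alpha QV_{f_{0}}^{e}$ is continuous on $\mathbb{K}$, and the Berge-type theorem of \cite[Lemma 2.5 and Theorem 4.1]{Fein13}, applied to the continuous compact-valued correspondence $x\mapsto A(x)$, delivers (a) and (b) at once. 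Your proposed repairs cannot substitute for this ingredient: the increasing pointwise limit of the continuous functions $T\phi_{n}$ is only lower semicontinuous, which you already have; and the contractivity route is a dead end, because if continuous $\phi_{n}$ converged to $V_{f_{0}}^{e}$ in the $W$-norm then ($W$ being continuous) $V_{f_{0}}^{e}$ would be a locally uniform limit of continuous functions and hence itself continuous, defeating the purpose of introducing the envelope.

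A second, unacknowledged gap is the continuity of $w_{1}$ in (c). Your iteration $w^{(n)}=T_{1}^{n}w^{(0)}$ starting from $w^{(0)}=TV_{f_{0}}^{e}$ need not stay inside $C_{W}(X)$: since $A_{1}(\cdot)$ is only upper semicontinuous (lower semicontinuity is not claimed and generally fails for argmin correspondences), the infimum of a continuous function over $A_{1}(x)$ is in general only lower semicontinuous in $x$, so $T_{1}$ does not obviously preserve continuity and the ``Berge-style analysis'' you defer to is precisely the missing argument. The paper instead treats $(X,A,\{A_{1}(x)\},Q,C)$ as a new control model and obtains $w_{1}$, its continuity, and the selector $f_{1}$ directly from Theorem \ref{FP2} combined with \cite{Fein13}; part (d) then follows because $w_{1}$ is the optimal value of that restricted problem, whose stationary policies are exactly $\mathbb{F}_{1}$. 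Your elementary derivation of (d) from $T_{1}V_{f}\leq T_{f}V_{f}=V_{f}$ and iteration is a valid alternative for that final step, but it presupposes the unresolved parts (a) and (c).
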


\begin{proof}
First note that under Conditions \ref{GC} and \ref{CC}, the mapping%
\[
(x,a)\rightarrow\int_{X}v(y)Q(dy|x,a)
\]
is continuous for any function $u\in L_{W}(X)$ \cite[Lemma 8.5.5 (a)]{HL99}.
Thus, if $C$ is continuous on $\mathbb{K},$ the function $Lv$ is also
continuous on $\mathbb{K}.$ Moreover, because the mapping $x\rightarrow A(x)$
is compact-valued upper semicontinuous, for each $s\in S$ and sequence
$\{s_{n}\}\subset X$ such that $s_{n}\rightarrow s,$ any actions sequence
$a_{n}\in A(s_{n}),n\in\mathbb{N},$ has an accumulation point $a\in A(s).$
Then, by \cite[Lemma 2.5 and Theorem 4.1]{Fein13}, $TV_{f_{0}}^{e}$ is a
continuous function and $x\rightarrow A_{1}(x)$ is a compact-valued upper
semicontinuous mapping.

Now consider the discounted optimal control problem for the Markov decision
model $(X,A,\{A_{1}(x):x\in X\},Q,C).$ By Theorem \ref{FP2} and \cite[Lemma
2.5 and Theorem 4.1]{Fein13}, there exist a unique function $w_{1}$ and policy
$f_{1}\in\mathbb{F}_{0}$ for which parts (c) and (d) hold.
\end{proof}

Under conditions of this theorem, the PI algorithm reads as follows.

\begin{description}
\item[Initial step.] Choose $f_{0}\in\mathbb{F}$ and find $w_{0}:=V_{f_{0}%
}^{e}$.

\item[First improvement step.] Set $n=0$ and find the set%
\[
A_{n+1}(x):=\{a\in A(x):Lw_{n}(x,a)=Tw_{n}(x)\},\ \ x\in X.
\]

\item[Best improvement and evaluation steps.] Find a policy $f_{n+1}%
\in\mathbb{F}$ and a function $w_{n+1}\in C_{W}(X)$ satisfying the equations%
\begin{align}
w_{n+1}(x)  &  =\inf_{a\in A_{n+1}(x)}[C(x,a)+\alpha Qw_{n+1}(x,a)]\label{NOE}%
\\
& \nonumber\\
&  =C_{f_{n+1}}(x)+\alpha Q_{f_{n+1}}w_{n+1}(x)\nonumber
\end{align}
for all $x\in X.$ Next, put $n:=n+1$ and go to the first improvement step.
\end{description}

The linear convergence of the policy iteration functions $\{w_{n}\}$ is
established in the following corollary, which is a direct consequence of
Theorems \ref{PIA} and \ref{MPI}.

\begin{corollary}
Suppose conditions of Theorem \ref{MPI} hold. Then:

(a) if $w_{n}=w_{n+1}$, then $w_{n}=V_{\ast}$ and $f_{n+1}$ is optimal;

(b) $||w_{n+1}-V_{\ast}||_{W}\leq||w_{n}-V_{\ast}||_{W}\gamma\leq L\gamma^{n}$
for all $n\in\mathbb{N},$ with $L^{\prime}:=||w_{0}-V_{\ast}||_{W};$

(c) thus,%
\[
\limsup_{n\rightarrow\infty}\frac{||w_{n+1}-V_{\ast}||_{W}}{||w_{n}-V_{\ast
}||_{W}}\leq\gamma.
\]

\end{corollary}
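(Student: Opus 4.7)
My plan is to follow the template of Theorem \ref{PIA} and Corollary \ref{LC}, using Theorem \ref{MPI} to dispatch a single regularity point. The key observation supplied by Theorem \ref{MPI} is that for every $n\geq 1$ the iterate $w_{n}$ lies in $C_{W}(X)$ and is the unique fixed point of the contraction $T_{f_{n}}$, so $w_{n}=V_{f_{n}}$ and hence $V_{f_{n}}^{e}=w_{n}$; at $n=0$ we have directly $w_{0}=V_{f_{0}}^{e}$ from the initial step of the algorithm.

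The first step is to derive the monotone chain
\[
w_{n}\geq Tw_{n}\geq w_{n+1}\geq V_{\ast},\qquad n\in\mathbb{N}_{0}.
\]
Because $f_{n+1}(x)\in A_{n+1}(x)$ by construction, $T_{f_{n+1}}w_{n}=Tw_{n}$, so the hypothesis of Lemma \ref{L1} applied with $g_{0}:=f_{n}$ and $g_{1}:=f_{n+1}$ is met, yielding
\[
w_{n}=V_{f_{n}}^{e}\geq TV_{f_{n}}^{e}=Tw_{n}\geq V_{f_{n+1}}=w_{n+1},
\]
and $w_{n+1}=V_{f_{n+1}}\geq V_{\ast}$ is automatic.

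The three conclusions then follow mechanically. For (a), the collapse $w_{n}=w_{n+1}$ forces $w_{n}=Tw_{n}$, and since $w_{n}\in L_{W}(X)$, Theorem \ref{FP2} identifies $w_{n}$ with $V_{\ast}$, whence $V_{f_{n+1}}=w_{n+1}=V_{\ast}$ and $f_{n+1}$ is optimal. For (b), the sandwich $V_{\ast}\leq w_{n+1}\leq Tw_{n}$ together with $V_{\ast}=TV_{\ast}$ gives $0\leq w_{n+1}-V_{\ast}\leq Tw_{n}-TV_{\ast}$; taking $W$-norms and invoking the contraction property of $T$ on $L_{W}(X)$ (Remark \ref{FP1}(b)) yields $||w_{n+1}-V_{\ast}||_{W}\leq\gamma||w_{n}-V_{\ast}||_{W}$, and iteration produces the $L^{\prime}\gamma^{n}$ bound. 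Part (c) is then an immediate rearrangement.

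I do not expect a real obstacle: the only ingredient beyond the argument of Corollary \ref{LC} is the identification $w_{n}=V_{f_{n}}$ (equivalently $w_{n}^{e}=w_{n}$) for $n\geq 1$, which is packaged into Theorem \ref{MPI}. After that, the proof is essentially a transcription of Theorem \ref{PIA}(a)--(b) and Corollary \ref{LC}, which is precisely why the authors describe the corollary as a direct consequence of those two results.
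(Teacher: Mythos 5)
Your proposal is correct and follows exactly the route the paper intends: the paper gives no separate proof, asserting only that the corollary is a direct consequence of Theorems \ref{PIA} and \ref{MPI}, and your identification $w_{n}=V_{f_{n}}=V_{f_{n}}^{e}$ for $n\geq1$ (via Remark \ref{FP1}(c)) together with a rerun of the Lemma \ref{L1} / Corollary \ref{LC} argument is precisely the missing detail. Nothing to object to.
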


\begin{remark}
(a) The inventory model and the queueing systems given \cite[Section 8.6, p.
68]{HL99} satisfies the conditions in Corollaries \ref{LC} and \ref{MPI}.

(b) Finally notice that if the set-valued mappings $x\rightarrow A_{n+1}(x)$
admits just one measurable selector $f_{n+1}\in\mathbb{F}$ for each
$n\in\mathbb{N}_{0},$ then the latter PI algorithm becomes the standard policy
iteration algorithm. This happens, for instance, if the minimizers in
(\ref{NOE}) are unique for each $x\in X$ and $n\in\mathbb{N}_{0}.$
\end{remark}

\end{document}